\newtheorem{thm}{Theorem}[section]
\newtheorem{lem}[thm]{Lemma}
\theoremstyle{definition}
\newtheorem{defn}[thm]{Definition}
\theoremstyle{remark}
\newtheorem{rem}[thm]{Remark}
\numberwithin{equation}{section}
\newcommand{\f}{\Lambda(f)}
\newcommand{\Fq}{\mathbb{F}_{q}[x]}
\begin{document}

\begin{frontmatter}

\title{An improved asymptotic formula for the distribution of irreducible polynomials in arithmetic progressions over $\mathbb{F}_{q}[x]$}

\author[label1]{Zihan Zhang}
\address[label1]{Department of Mathematics, Sichuan University, Chengdu, 610065, China}
\ead{zzhsdj@foxmail.com}

\author[label5]{Dongchun Han\corref{cor1}}
\address[label5]{School of Mathematics, Southwest Jiaotong University, Chengdu, 610031, China}
\ead{han-qingfeng@163.com}

\cortext[cor1]{Corresponding author}

\tnotetext[fn1]{D. Han's research was supported by the National Science Foundation of China Grant No.11601448 and the Fundamental Research Funds for the Central Universities of China under Grant 2682016CX121.}

\begin{abstract}
Let $\mathbb{F}_{q}$ be a finite field with $q$ elements and $\mathbb{F}_{q}[x]$ the
ring of polynomials over $\mathbb{F}_{q}$. Let $l(x), k(x)$ be coprime polynomials in $\mathbb{F}_{q}[x]$ and $\Phi(k)$ the Euler function in $\mathbb{F}_{q}[x]$. Let $\pi(l, k; n)$ be the number of monic irreducible polynomials of degree $n$ in $\mathbb{F}_{q}[x]$ which are congruent to $l(x)$ module $k(x)$. For any positive integer $n$, we denote by $\Omega(n)$ the least prime divisor of $n$. In this paper, we show that
$$\pi(l, k; n)=\frac{1}{\Phi(k)}\frac{q^{n}}{n}+O\left(n^{\alpha}\right)+O\Big(\frac{q^{\frac{n}{\Omega{(n)}}}}{n}\Big),$$
where $\alpha$ only depends on the choice of $k(x)\in\Fq$. Note that the above error term improves the one implied by a deep result of A. Weil \cite{p}. Our approach is completely elementary.
\end{abstract}

\begin{keyword}
Dirichlet's theorem\sep irreducible polynomials\sep function fields\sep finite fields
\end{keyword}

\end{frontmatter}


\section{Introduction}

The origin of analytic number theory can be traced back to Euler's analytic proof of the existence of infinitely many primes in 1737. A century later, a seminal work by Dirichlet \cite{a}, inspired by Euler's proof, states that for two coprime integers $a$ and $b$, there are infinitely many primes $p$ such that $p\equiv a(\mathrm{mod}~  b)$. These classical results and methods introduced therein greatly stimulated the development of number theory.

Let $\mathbb{F}_{q}$ be a finite field with $q$ elements. We denote by $\mathbb{F}_{q}[x]$ the
ring of polynomials over $\mathbb{F}_{q}$. In 1919, H. Kornblum \cite{b} considered the analogue of Dirichlet's theorem over $\mathbb{F}_{q}[x]$ in his doctoral thesis. In fact, he proved that, given two coprime polynomials $l(x)$ and $k(x)$ in $\Fq$, there are infinitely many monic irreducible polynomials $p(x)\in\Fq$ such that $p(x)\equiv l(x)(\mathrm{mod}~ k(x))$.

Let $\pi(l, k; n)$ denote the number of monic irreducible polynomials of degree $n$ in $\mathbb{F}_{q}[x]$ which are congruent to $l(x)$ module $k(x)$, where polynomials $l(x),k(x)$ are coprime. We denote by $\Phi(k)$ the analogue of the Euler function which is defined as the size of the multiplicative group $\left(\Fq/(k(x))\right)^{\times}$.
In 1924, Artin \cite{o} refined Kornblum's result by showing the following theorem.
\begin{thm}[\cite{o}]\label{0}
 Let $l(x), k(x)$ be coprime polynomials
in $\Fq$. Then, it follows that
 \[\pi(l, k; n)=\frac{1}{\Phi(k)}\frac{q^{n}}{n}+O\Big(\frac{q^{n\theta}}{n}\Big) \quad \text { with } \quad \frac{1}{2} \leq \theta<1.\]

\end{thm}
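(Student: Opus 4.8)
The plan is to adapt the classical Dirichlet--de la Vallée-Poussin argument to $\Fq$, with Dirichlet $L$-functions replaced by their function-field analogues. First I would introduce the characters $\chi$ of the finite abelian group $\left(\Fq/(k(x))\right)^{\times}$, extend each to all of $\Fq$ by declaring $\chi(f)=0$ whenever $\gcd(f,k)\neq 1$, and attach the $L$-series
\[
L(u,\chi)=\sum_{f\text{ monic}}\chi(f)\,u^{\deg f}=\prod_{P}\left(1-\chi(P)u^{\deg P}\right)^{-1},
\]
where $P$ runs over monic irreducibles and $u=q^{-s}$. Writing $\Lambda$ for the function-field von Mangoldt function and $\psi(n;l,k)=\sum_{\deg f=n,\ f\equiv l}\Lambda(f)$, the orthogonality relations for characters give the decomposition $\psi(n;l,k)=\frac{1}{\Phi(k)}\sum_{\chi}\overline{\chi(l)}\,\psi(n,\chi)$ with $\psi(n,\chi)=\sum_{\deg f=n}\Lambda(f)\chi(f)$, which isolates the contribution of each character.

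The structural input is that for non-principal $\chi$ the character sum $\sum_{\deg f=n}\chi(f)$ vanishes once $n\geq\deg k$, so $L(u,\chi)$ is a polynomial in $u$ of degree at most $\deg k-1$; factoring it as $\prod_{i}(1-\gamma_{i}u)$ and differentiating logarithmically yields the identity $u\,L'(u,\chi)/L(u,\chi)=\sum_{n\geq 1}\psi(n,\chi)u^{n}$ together with $\psi(n,\chi)=-\sum_{i}\gamma_{i}^{\,n}$. For the principal character one has $L(u,\chi_{0})=(1-qu)^{-1}\prod_{P\mid k}\left(1-u^{\deg P}\right)$, and the same logarithmic-derivative computation gives $\psi(n,\chi_{0})=q^{n}+O(1)$, where the $O(1)$ depends only on $k$. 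Substituting back, the principal character produces the main term $q^{n}/\Phi(k)$, while the non-principal characters contribute $-\frac{1}{\Phi(k)}\sum_{\chi\neq\chi_{0}}\overline{\chi(l)}\sum_{i}\gamma_{i}^{\,n}$.

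Everything then reduces to bounding the inverse roots $\gamma_{i}$. The crux is to establish $|\gamma_{i}|\leq q^{\theta}$ for some $\theta<1$ uniformly over the non-principal $\chi$, equivalently that $L(u,\chi)$ has no zero in the disc $|u|<q^{-\theta}$. This is precisely where the analytic difficulty concentrates: it is the function-field counterpart of the non-vanishing $L(1,\chi)\neq 0$ underpinning Dirichlet's theorem, and the sharp value $\theta=\tfrac12$ corresponds to the Riemann Hypothesis for curves. I expect this root bound to be the main obstacle, since obtaining any $\theta$ strictly below $1$ already forces one to prove non-vanishing of the $L$-polynomials at the critical argument. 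Granting such a bound, the non-principal terms are $O\!\left(q^{n\theta}\right)$, so that $\psi(n;l,k)=q^{n}/\Phi(k)+O\!\left(q^{n\theta}\right)$.

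Finally I would pass from $\psi$ to $\pi(l,k;n)$. Since each irreducible $P$ of degree $n$ contributes $\Lambda(P)=n$, we have $\psi(n;l,k)=n\,\pi(l,k;n)+E_{n}$, where $E_{n}$ collects the proper prime powers $P^{m}$ with $m\geq 2$ and $\deg f=n$. Such a power forces $\deg P=n/m$ with $m\mid n$, so $m\geq\Omega(n)$ and $\deg P\le n/2$; counting the available irreducibles gives $E_{n}=O\!\left(q^{n/\Omega(n)}\right)=O\!\left(q^{n/2}\right)$, which is absorbed into $O\!\left(q^{n\theta}\right)$ because $\theta\geq\tfrac12$. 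Dividing through by $n$ then converts the main term into $\frac{1}{\Phi(k)}\frac{q^{n}}{n}$ and the error into $O\!\left(q^{n\theta}/n\right)$ with $\tfrac12\leq\theta<1$, completing the proof.
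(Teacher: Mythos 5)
A preliminary remark on the comparison itself: the paper never proves Theorem \ref{0}. It is quoted from Artin \cite{o} purely as background, and the paper's own machinery (the elementary sieve with $\Lambda$ and M\"obius inversion used for Theorem \ref{1}) plays no role in it. So your proposal has to be judged on its own merits rather than against a proof in the paper.

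On those merits, your framework is the correct classical one — orthogonality of the characters of $\left(\Fq/(k(x))\right)^{\times}$, the identity $\psi(n,\chi)=-\sum_{i}\gamma_{i}^{\,n}$ from the logarithmic derivative of the $L$-polynomial, the computation $\psi(n,\chi_{0})=q^{n}+O(1)$, and the absorption of prime-power terms into $O\big(q^{n/2}\big)$ — and each of those steps is correct as stated. But the argument has a hole exactly where you write ``Granting such a bound'': the claim that every inverse root satisfies $|\gamma_{i}|\leq q^{\theta}$ for some $\theta<1$ is not an auxiliary estimate, it \emph{is} the theorem. The Euler product only yields $|\gamma_{i}|\leq q$, i.e.\ no zeros of $L(u,\chi)$ in the open disc $|u|<q^{-1}$; to improve this to a strict inequality — and hence, by finiteness of the roots and of the characters, to a uniform $\theta<1$ — one must prove that $L(u,\chi)\neq 0$ on the whole circle $|u|=q^{-1}$. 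That is the function-field analogue of the non-vanishing of $L(s,\chi)$ on $\mathrm{Re}(s)=1$, including the classically difficult case of real characters (the analogue of $L(1,\chi)\neq 0$), and it is precisely the content Kornblum and Artin had to establish; an elementary route is via positivity of the coefficients of $\prod_{\chi}L(u,\chi)$ and a Landau-type argument, as in Chapter 4 of Rosen \cite{j}. As written, your proposal proves only the implication ``zero-free circle $\Rightarrow$ Theorem \ref{0}''; supplying that one non-vanishing argument is what would turn the reduction into a proof.
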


Instead of considering the distribution of monic irreducible polynomials in arithmetic progressions, based on Dickson's work \cite{4} in 1911, Carlitz \cite{1} and Uchiyama \cite{2} provided some different results for distribution of monic irreducible polynomials in $\Fq$ with some given coefficients.
\begin{thm}[\cite{2}]\label{4}
Let $s, t$ be two positive integers and $S(s, t; n)$ the number of monic irreducible polynomials of degree $n$ in $\Fq$ with first $s$ coefficients
	and last $t$ coefficients given. Assume that the characteristic of $\mathbb{F}_{q}$ is larger than $\max\{s,t-1\}$. Moreover, if the last coefficient is not zero. Then we have
	\[S(s,t;n)=\frac{1}{q^{s+t-1}{(q-1)}}\frac{q^{n}}{n}+ O\Big(\frac{q^{n\theta}}{n}\Big)\quad \text { with } \quad \frac{1}{2} \leq \theta<1.\]
\end{thm}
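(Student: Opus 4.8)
The plan is to translate the coefficient conditions defining $S(s,t;n)$ into trace and norm conditions on the roots, and then to detect those conditions by character orthogonality. If $p(x)=x^{n}+a_{1}x^{n-1}+\cdots+a_{n}$ is monic irreducible of degree $n$ with a root $\alpha\in\mathbb{F}_{q^{n}}$, its full root set is the Frobenius orbit $\set{\alpha,\alpha^{q},\dots,\alpha^{q^{n-1}}}$, so the $j$-th power sum of the roots is the trace $P_{j}=\sum_{i=0}^{n-1}\alpha^{jq^{i}}=\mathrm{Tr}_{\mathbb{F}_{q^{n}}/\mathbb{F}_{q}}(\alpha^{j})$. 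Newton's identities express $a_{1},\dots,a_{s}$ in terms of $P_{1},\dots,P_{s}$ and vice versa, and this change of variables is invertible precisely when the characteristic exceeds $s$, which forces $\mathrm{char}\,\mathbb{F}_{q}>s$. Symmetrically, since the constant term $a_{n}=(-1)^{n}\prod_{i}\alpha^{q^{i}}=(-1)^{n}N(\alpha)$ is assumed nonzero, the last $t$ coefficients are governed by $N(\alpha)\in\mathbb{F}_{q}^{\times}$ together with the first $t-1$ power sums of the reciprocal roots $\alpha^{-q^{i}}$, that is, the traces $\mathrm{Tr}(\alpha^{-j})$ for $1\le j\le t-1$; inverting Newton's identities here forces $\mathrm{char}\,\mathbb{F}_{q}>t-1$. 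Together these give the hypothesis $\mathrm{char}\,\mathbb{F}_{q}>\max\{s,t-1\}$, and fixing the $s+t$ coefficients becomes the same as fixing the data $(\mathrm{Tr}(\alpha),\dots,\mathrm{Tr}(\alpha^{s}))$, $(\mathrm{Tr}(\alpha^{-1}),\dots,\mathrm{Tr}(\alpha^{-(t-1)}))$, and $N(\alpha)$.

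Next I would count the elements $\alpha\in\mathbb{F}_{q^{n}}$ of exact degree $n$ meeting these constraints; since each degree-$n$ monic irreducible has exactly $n$ such roots, all carrying the same coefficient data, this count equals $n\,S(s,t;n)$. Using the $q$ additive characters $\psi$ of $\mathbb{F}_{q}$ to detect each trace condition and the $q-1$ multiplicative characters $\chi$ of $\mathbb{F}_{q}^{\times}$ to detect the norm condition, orthogonality writes $n\,S(s,t;n)$ as $\frac{1}{q^{s+t-1}(q-1)}$ times a sum, over the $q^{s+t-1}(q-1)$ admissible character tuples, of sums of the shape
\[
  \sum_{\alpha\in\mathbb{F}_{q^{n}}}
  \psi\!\Big(\sum_{j=1}^{s}c_{j}\,\mathrm{Tr}(\alpha^{j})
  +\sum_{j=1}^{t-1}d_{j}\,\mathrm{Tr}(\alpha^{-j})\Big)\,\chi\big(N(\alpha)\big),
\]
with $\alpha$ restricted to exact degree $n$ by the usual M\"obius sieve over subfields. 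The number of character tuples is independent of $n$.

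The principal tuple (all $c_{j}=d_{j}=0$ and $\chi$ trivial) contributes the unrestricted count of generators of $\mathbb{F}_{q^{n}}$, which equals $\sum_{d\mid n}\mu(d)q^{n/d}=q^{n}+O(q^{n/2})$; dividing by $n$ and by $q^{s+t-1}(q-1)$ produces the main term $\frac{1}{q^{s+t-1}(q-1)}\frac{q^{n}}{n}$.

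The nonprincipal tuples produce the error term, and estimating them is the crux of the argument. After the subfield sieve removes the lower-degree $\alpha$ at a cost of $O(q^{n/2})$, each remaining sum is a mixed additive--multiplicative exponential sum over the full field $\mathbb{F}_{q^{n}}$, and it is bounded by Weil's theorem in the form of the Carlitz--Uchiyama estimate, giving square-root cancellation $O(q^{n/2})$ with an implied constant depending only on $s,t$ and $q$. Summing the $O(1)$ nonprincipal contributions and dividing by $n$ yields an error $O(q^{n/2}/n)$, admissible with $\theta=\tfrac12$; any weaker zero-free information for the associated $L$-functions would still deliver some $\theta<1$. The main obstacle is exactly this character-sum bound: the orthogonality setup and the subfield sieve are elementary, but the genuine square-root saving that places the error below the main term rests on Weil's deep estimate.
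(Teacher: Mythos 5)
You should know at the outset that the paper contains no proof of this statement: Theorem~\ref{4} is quoted as background and attributed to Uchiyama \cite{2}, so there is no internal argument to compare yours against, and I can only judge the proposal on its own merits and against the paper's general technique. Your sketch is correct in substance and is essentially the classical route to results of this kind. The Newton-identity dictionary (first $s$ coefficients $\leftrightarrow$ $\mathrm{Tr}(\alpha^{j})$ for $1\le j\le s$; last $t$ coefficients $\leftrightarrow$ $N(\alpha)$ together with $\mathrm{Tr}(\alpha^{-j})$ for $1\le j\le t-1$) is exactly where the hypotheses $\mathrm{char}\,\mathbb{F}_{q}>\max\{s,t-1\}$ and ``last coefficient nonzero'' enter; orthogonality of characters then reduces the count to mixed additive--multiplicative sums, the principal tuple gives the main term $q^{n}/(q^{s+t-1}(q-1))$, and Weil's bound gives square-root cancellation in the rest, with the subfield sieve costing only $O(q^{n/2})$ since $\sum_{p\mid n}q^{n/p}\le q^{n/2}+\log_{2}(n)\,q^{n/3}=O(q^{n/2})$. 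One detail you should make explicit in a full write-up: the Weil bound for $\sum_{\alpha}\psi\bigl(\mathrm{Tr}(f(\alpha)+g(\alpha^{-1}))\bigr)\chi\bigl(N(\alpha)\bigr)$ requires that the rational function $f(T)+g(1/T)$ not be of the degenerate form $h^{p}-h+c$; this is automatic here because its pole orders at $0$ and $\infty$ are at most $\max\{s,t-1\}<p$, so the characteristic hypothesis does double duty (Newton inversion and nondegeneracy), while tuples with trivial additive part but nontrivial $\chi$ vanish outright because $\chi\circ N$ is a nontrivial multiplicative character of $\mathbb{F}_{q^{n}}^{\times}$. As to the comparison you could not see: the paper's own elementary machinery (the von Mangoldt--M\"{o}bius sieve of Section~3) proves only the arithmetic-progression theorem, and it would cover the ``last $t$ coefficients'' half of this statement --- fixing the last $t$ coefficients is precisely the congruence $f\equiv l\ (\mathrm{mod}\ x^{t})$, and the nonzero-last-coefficient hypothesis is exactly the coprimality $(l,x^{t})=1$ --- but the ``first $s$ coefficients'' half is a condition at the infinite place in Hayes's sense \cite{3}, which the paper's method does not touch. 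So your reliance on Weil \cite{p} is not a defect of the sketch but a genuine feature of this particular theorem; indeed the paper itself remarks that removing Weil from statements involving prescribed leading coefficients remains open.
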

Note that, when $s=t=1$ in the above theorem, Carlitz \cite{1} gave an asymptotic formula with a better error term.

Nevertheless, it seems that it is still not clear whether we can improve the error term of Theorem \ref{4} to $O\big(q^{\frac{n}{2}}/n\big)$.
Later, inspired by the results of Artin \cite{o} and Uchiyama \cite{2}, Hayes \cite{3} provided the following more general result.
\begin{thm}[\cite{3}]\label{5}
Let $s$ be an positive integer and $\pi_{S}(s,l,k;n)$ the number of monic irreducible polynomials of degree $n$ in $\Fq$ with given first $s$ coefficients which are congruent to $l(x)$ module $k(x)$. Then we have
	\[\pi_{S}(s,l,k;n)=\frac{1}{q^{s}{\Phi(k)}}\frac{q^{n}}{n}+ O\Big(\frac{q^{n\theta}}{n}\Big)\quad \text { with } \quad \frac{1}{2} \leq \theta<1,\]
where polynomials $l(x),k(x)$ are coprime.
\end{thm}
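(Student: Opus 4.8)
The plan is to adapt Dirichlet's classical character-theoretic method to the function field setting, combining the Dirichlet characters modulo $k(x)$ with Hayes' characters that detect the leading coefficients. First I would introduce the relevant group of characters: for the congruence condition, the $\Phi(k)$ Dirichlet characters $\chi$ of $\bigl(\Fq/(k)\bigr)^{\times}$; for the condition on the first $s$ coefficients of a monic $f=x^{m}+c_{1}x^{m-1}+\cdots$, the prescription of $(c_{1},\dots,c_{s})$ is detected by characters $\psi$ of a finite abelian group of order $q^{s}$, realizable as characters modulo a power of the place at infinity $1/x$. The product characters $\chi\psi$ then run over a group of order $q^{s}\Phi(k)$, and orthogonality gives
$$\pi_{S}(s,l,k;n)=\frac{1}{q^{s}\Phi(k)}\sum_{\chi}\sum_{\psi}\overline{c(\chi,\psi)}\sum_{\substack{P\text{ irreducible}\\ \deg P=n}}\chi(P)\psi(P)+O\!\Bigl(\frac{q^{n/2}}{n}\Bigr),$$
where $c(\chi,\psi)$ is the value of $\chi\psi$ at the prescribed class and the error absorbs the contribution of reducible prime powers $P^{d}$ ($d\ge 2$) of degree $n$.

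Next I would pass to generating functions. For each pair $(\chi,\psi)$ set $L(u,\chi,\psi)=\sum_{f\text{ monic}}\chi(f)\psi(f)\,u^{\deg f}=\prod_{P}\bigl(1-\chi(P)\psi(P)u^{\deg P}\bigr)^{-1}$, so that the logarithmic derivative satisfies
$$-u\frac{d}{du}\log L(u,\chi,\psi)=\sum_{n\ge 1}\Bigl(\sum_{\deg f=n}\f\,\chi(f)\psi(f)\Bigr)u^{n}.$$
Thus bounding the character sums over irreducibles reduces to controlling the Taylor coefficients of $\log L(u,\chi,\psi)$, which in turn are governed by the inverse roots of $L$.

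Then I would invoke the analytic properties of these $L$-functions. For the principal pair $(\chi_{0},\psi_{0})$ one has $L(u,\chi_{0},\psi_{0})\sim (1-qu)^{-1}$, whose logarithmic-derivative coefficients are $q^{n}$; after converting the $\f$-weighted sum into a count of degree-$n$ irreducibles (the factor $n$ and the prime-power discrepancy being standard), this yields the main term $\tfrac{1}{q^{s}\Phi(k)}\tfrac{q^{n}}{n}$. For every non-principal pair, $L(u,\chi,\psi)$ is a polynomial in $u$ whose degree is bounded by the conductor $\deg k+s-1$, and whose inverse roots have modulus at most $q^{1/2}$ by the Riemann Hypothesis for curves (Weil), or at most $q^{\theta}$ for some $\tfrac12\le\theta<1$ by more elementary zero estimates. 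Either way the corresponding coefficient is $O(q^{n\theta})$. Summing the $O(q^{s}\Phi(k))$ non-principal contributions, a bound independent of $n$, and dividing by $q^{s}\Phi(k)\cdot n$ produces the claimed error $O\bigl(q^{n\theta}/n\bigr)$.

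The hard part will be the correct construction of Hayes' coefficient characters $\psi$ and the verification that the joint characters $\chi\psi$ yield $L$-functions that are \emph{polynomials} of controlled degree: establishing rationality together with the conductor bound for $L(u,\chi,\psi)$ is the crux, since once this is in place the main term follows from orthogonality and the remaining estimates are a routine application of the prime-polynomial machinery and the Weil bound.
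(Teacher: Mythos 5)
The paper does not prove this statement: it is quoted as a known theorem of Hayes \cite{3}, so there is no internal proof to compare your proposal against. Judged on its own, your outline is essentially a reconstruction of Hayes' original argument: he combines the congruence condition modulo $k$ with the prescription of the first $s$ coefficients into a single equivalence whose invertible classes form an abelian group of order $q^{s}\Phi(k)$ (your product characters $\chi\psi$), and then runs Dirichlet's method with the associated $L$-series. You also correctly locate the crux: the substance of Hayes' paper is showing that the non-principal $L$-functions are polynomials in $u$ of degree bounded in terms of $\deg k$ and $s$, and bounding their inverse roots --- done elementarily this gives some $\theta<1$, which is exactly what the statement asserts, while Weil's bound would give $\theta=\tfrac{1}{2}$ (the refinement Hayes claimed for $x^{q}-x\nmid k$ but, as the surrounding text of the paper notes, never published in detail). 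Two minor corrections to your write-up: the orthogonality identity for irreducibles is exact and needs no error term --- the $O\bigl(q^{n/2}/n\bigr)$ correction from prime powers belongs to the later step where the $\Lambda$-weighted sums produced by the logarithmic derivative are converted back into counts of irreducibles; and the specific conductor bound $\deg k+s-1$ should not be asserted in passing, since establishing any such bound is precisely part of the rationality step you flag as the hard one (any bound depending only on $\deg k$ and $s$ suffices for the theorem). Finally, note that your route is entirely unlike the elementary sieve the paper develops for its own main theorem; that sieve avoids characters and $L$-functions altogether, but it cannot see the prescribed-coefficient condition, so for the present statement the character-theoretic approach you chose is the appropriate one.
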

Although Hayes claimed that he can improve the error term to $O\big(q^{\frac{n}{2}}/n\big)$ when $x^{q}-x$ does not divide $k(x)$, he never published this result in detail. While, although still weaker than $O\big(q^{\frac{n}{2}}/n\big)$, Hensley \cite{c} provided some improved error terms in some special cases over $\mathbb{F}_{2}[x]$. In 2002, Rosen \cite{j} provided a modern perspective about the distribution of irreducible polynomials in arithmetic progressions over $\mathbb{F}_{q}[x]$ by showing the following theorem.
\begin{thm}[\cite{j}]\label{40}
Let $l(x), k(x)$ be coprime polynomials
in $\Fq$. Then, it follows that
 \[\pi(l,k;n)=\frac{1}{\Phi(k)}\frac{q^{n}}{n}+O\Big(\frac{q^{\frac{n}{2}}}{n}\Big).\]
\end{thm}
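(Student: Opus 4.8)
The plan is to pass to Dirichlet characters modulo $k(x)$ and to exploit the function-field analogue of the Riemann Hypothesis for the associated $L$-functions. A Dirichlet character mod $k$ is a homomorphism $\chi:(\Fq/(k))^{\times}\to\mathbb{C}^{\times}$, extended by zero to polynomials not coprime to $k$; there are exactly $\Phi(k)$ of them. Writing $\Pi(\chi,n)=\sum_{\deg P=n}\chi(P)$, with $P$ ranging over monic irreducibles, orthogonality of characters gives
\[
\pi(l,k;n)=\frac{1}{\Phi(k)}\sum_{\chi\bmod k}\bar\chi(l)\,\Pi(\chi,n).
\]
Everything then reduces to understanding $\Pi(\chi,n)$ for each $\chi$, the principal character $\chi_{0}$ producing the main term and the non-principal characters producing the error.

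First I would dispose of the principal character. Here $\Pi(\chi_{0},n)$ counts the monic irreducibles of degree $n$ coprime to $k$, which differs from the total count by at most the number of irreducible factors of $k$, hence by $O(1)$. The prime polynomial theorem---an elementary consequence of the identity $q^{n}=\sum_{d\mid n}d\,N_{d}$ for $N_{d}$ the number of monic irreducibles of degree $d$, inverted by M\"obius---yields $\frac{1}{n}\sum_{d\mid n}\mu(d)q^{n/d}=\frac{q^{n}}{n}+O\big(q^{n/2}/n\big)$, so $\Pi(\chi_{0},n)=\frac{q^{n}}{n}+O\big(q^{n/2}/n\big)$, supplying the stated main term after division by $\Phi(k)$.

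The substance is the bound for non-principal $\chi$. The key tool is the $L$-function
\[
L(u,\chi)=\sum_{f\ \mathrm{monic}}\chi(f)\,u^{\deg f}=\prod_{P}\bigl(1-\chi(P)u^{\deg P}\bigr)^{-1},
\]
which for non-principal $\chi$ is a polynomial in $u$ of degree at most $\deg k-1$; I would factor it as $\prod_{j}(1-\alpha_{j}u)$ over its inverse roots. Taking the logarithmic derivative converts the Euler product into the generating function for the twisted von Mangoldt sums $\psi(\chi,n)=\sum_{\deg f=n}\Lambda(f)\chi(f)$, giving the closed form $\psi(\chi,n)=-\sum_{j}\alpha_{j}^{n}$. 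Separating off the proper prime powers $P^{m}$ with $m\ge 2$ (whose contribution is dominated by the $m=2$ term and is therefore $O(q^{n/2})$) shows $n\,\Pi(\chi,n)=\psi(\chi,n)+O(q^{n/2})$.

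The main obstacle is obtaining square-root cancellation in $\psi(\chi,n)$, that is, the estimate $|\alpha_{j}|\le q^{1/2}$ for every inverse root. This is exactly the Riemann Hypothesis for curves over finite fields, i.e.\ Weil's theorem, and it is the deep input that the present paper will subsequently seek to circumvent. Granting it, $|\psi(\chi,n)|\le(\deg k)\,q^{n/2}$, whence $\Pi(\chi,n)=O\big(q^{n/2}/n\big)$ with implied constant depending only on $k$. Summing the $\Phi(k)-1$ non-principal characters in the orthogonality relation produces a total error of $O\big(q^{n/2}/n\big)$, and combining this with the main term completes the proof. The only genuinely hard step is Weil's bound; everything else is formal manipulation of $L$-functions and elementary counting.
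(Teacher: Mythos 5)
Your proposal is correct, and it is essentially the proof the paper points to for this statement: Theorem \ref{40} is quoted from Rosen's book \cite{j} rather than proved in the paper, and the paper's only comment on its proof is that it "is based on a deep result of A. Weil \cite{p}," which is exactly the character-decomposition-plus-Weil-bound argument you give (orthogonality over the $\Phi(k)$ Dirichlet characters, the polynomial $L$-function of degree at most $\deg k-1$ for non-principal $\chi$, and $|\alpha_j|\le q^{1/2}$ from the Riemann Hypothesis for function fields). You also correctly identify Weil's bound as the deep input that the paper's own main result, Theorem \ref{1}, is designed to avoid by elementary means.
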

Note that, Rosen's proof is based on a deep result of A. Weil \cite{p}. Also, we refer to \cite{Yucas, Granger, Ha, Mullen, Hsu, Pollack, Mullen1} for recent progresses on the distribution of irreducible polynomials over $\mathbb{F}_{q}[x]$ and the references therein. In this paper, we shall significantly improve Theorem \ref{40}. For any positive integer $n$, denote by $\Omega(n)$ the least prime divisor of $n$. The following theorem is our main result.
\begin{thm}\label{1}
Let $l(x), k(x)$ be coprime polynomials in $\Fq$. Then, it follows that
 \[\pi(l,k;n)=\frac{1}{\Phi(k)}\frac{q^{n}}{n}+O\Big(n^{\alpha}\Big)+O\Big(\frac{q^{\frac{n}{\Omega(n)}}}{n}\Big),\]
 where  $\alpha$ is a constant only depends on the choice of $k(x)\in\Fq$.
\end{thm}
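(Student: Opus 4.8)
The plan is to reduce the count of irreducible polynomials to a character-weighted von Mangoldt sum and then to estimate the resulting character sums by elementary means, so that Weil's theorem is never invoked. Writing $\Lambda$ for the von Mangoldt function on $\Fq$ (so $\Lambda(P^{m})=\deg P$ for $P$ monic irreducible and $0$ otherwise), I would first relate $\pi(l,k;n)$ to $A(l,k;n):=\sum_{\deg f=n,\ f\equiv l}\Lambda(f)$. The two differ only by the contribution of proper prime powers $f=P^{m}$ with $m\ge 2$ and $m\mid n$; such $P$ have $\deg P=n/m\le n/\Omega(n)$, and there are at most $q^{n/m}/(n/m)$ of them for each $m$, so after dividing by $n$ this correction is $O\big(q^{n/\Omega(n)}/n\big)$, which is exactly one of the two error terms in the statement. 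It therefore suffices to prove $A(l,k;n)=q^{n}/\Phi(k)+O(n^{\alpha+1})+O(q^{n/\Omega(n)})$.

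Next I would apply orthogonality of the Dirichlet characters modulo $k$ to write $A(l,k;n)=\tfrac{1}{\Phi(k)}\sum_{\chi}\overline{\chi}(l)\,\psi(\chi,n)$, where $\psi(\chi,n)=\sum_{\deg f=n}\Lambda(f)\chi(f)$. The principal character is elementary: $\psi(\chi_{0},n)=\sum_{\deg f=n}\Lambda(f)-\sum_{\deg f=n,\ \gcd(f,k)>1}\Lambda(f)=q^{n}+O(1)$, since $\sum_{\deg f=n}\Lambda(f)=q^{n}$ is the classical count and the excluded terms are powers of the finitely many irreducible divisors of $k$. This produces the main term $q^{n}/\Phi(k)$. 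The whole difficulty is thus concentrated in the non-principal sums, which I would analyse through the $L$-polynomial $L(u,\chi)=\sum_{0\le j<\deg k}\big(\sum_{\deg f=j}\chi(f)\big)u^{j}$; that this is a polynomial of degree at most $\deg k-1$ follows from the elementary vanishing $\sum_{\deg f=j}\chi(f)=0$ for $j\ge\deg k$, i.e. the equidistribution of monic polynomials among the residues modulo $k$. Factoring $L(u,\chi)=\prod_{i}(1-\alpha_{i}u)$ with at most $\deg k-1$ inverse roots gives, via the logarithmic derivative, $\psi(\chi,n)=-\sum_{i}\alpha_{i}^{n}$, and a constant $\alpha$ built from $\deg k$ should control the number of roots, hence the polynomial factor.

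The main obstacle, and in my view the crux of the entire theorem, is to bound the power sums $\sum_{i}\alpha_{i}^{n}$ elementarily by $O(n^{\alpha+1})+O(q^{n/\Omega(n)})$. The coefficients of $L(u,\chi)$ are fixed sums of roots of unity, hence algebraic integers of bounded size, and Newton's identities turn $\psi(\chi,n)$ into the solution of an explicit linear recurrence of order $\le\deg k-1$ with characteristic roots $\alpha_{i}$; this recurrence is completely elementary. However, the growth of its solutions is governed by $\max_{i}|\alpha_{i}|$, and any bound of the shape $|\alpha_{i}|\le q^{1-\delta}$ extracted from convergence of the Euler product only recovers the Artin-type exponent $\theta<1$, not the claimed saving. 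Reaching a genuinely polynomial bound on $\sum_{i}\alpha_{i}^{n}$, away from the unavoidable prime-power term, forces one to produce cancellation in these power sums from the arithmetic of the $\alpha_{i}$ rather than from analysis; I expect this to be the delicate and decisive step, precisely because such an elementary estimate must reproduce, and in the stated ranges of $n$ sharpen, the consequence of the Riemann Hypothesis for the associated function field. I would attempt it by exploiting the integrality of the coefficients and the explicit small conductor of $\chi$ to constrain the $\alpha_{i}$ beyond what the Euler product alone provides.
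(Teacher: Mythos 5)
There is a genuine gap, and it is exactly the one you flag yourself: the entire proposal up to your last paragraph is the classical Artin--Rosen setup (correct, and in fact lossless), but the decisive power-sum bound $|\psi(\chi,n)|\ll n^{\alpha+1}+q^{n/\Omega(n)}$ for non-principal $\chi$ is never proved, only announced as something you would ``attempt''. Worse, this step cannot be completed, because the estimate you need is false. To see that the theorem really forces it, run your orthogonality step backwards: since $\sum_{(l,k)=1}\chi(l)=0$ for $\chi\neq\chi_{0}$,
\[
\psi(\chi,n)=\sum_{(l,k)=1}\chi(l)\Bigl(A(l,k;n)-\frac{q^{n}}{\Phi(k)}\Bigr),
\]
so the theorem for a modulus $k$ would give $|\psi(\chi,n)|\ll_{k}n^{\alpha+1}+n\,q^{n/\Omega(n)}$ for every non-principal $\chi$ modulo $k$; your reduction is an equivalence, not just a sufficient condition. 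Now take $q\geq3$ and $k=x^{2}$. The map $a+bx\mapsto(a,\,b/a)$ identifies $\left(\Fq/(x^{2})\right)^{\times}$ with $\mathbb{F}_{q}^{\times}\times(\mathbb{F}_{q},+)$, so every character is $\chi(a+bx)=\chi_{1}(a)\psi_{1}(b/a)$, and $L(u,\chi)=1+c_{1}(\chi)u$ with
\[
c_{1}(\chi)=\sum_{a\neq0}\chi_{1}(a)\psi_{1}(1/a)=\sum_{t\neq0}\overline{\chi_{1}}(t)\psi_{1}(t),
\]
a Gauss sum. If $\chi_{1}$ and $\psi_{1}$ are both nontrivial (possible precisely when $q\geq3$), then $|c_{1}(\chi)|=\sqrt{q}$, so the single inverse root has modulus $\sqrt{q}$ and $|\psi(\chi,n)|=q^{n/2}$ for every $n$ --- in particular for every prime $n$, where the required bound reads $O(n^{\alpha+1})+O(nq)$. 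Taking $n$ a large prime gives a contradiction. So no exploitation of integrality of coefficients or smallness of conductor can rescue the step: it fails, and since the theorem implies it, the theorem as stated fails as well for these moduli. (This Gauss-sum obstruction is precisely why Theorem \ref{4} and its relatives stop at exponent $1/2$, and why Hensley's improvements are confined to $\mathbb{F}_{2}$, where such characters do not exist.)

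As for the comparison you were asked to make: the paper's route is genuinely different from yours --- it never introduces characters. It writes $\Lambda(f)=-\sum_{m\mid f}\mu(m)\deg m$, interchanges summation, and uses the exact count $\#\{f\equiv l\ (\mathrm{mod}\ k):\deg f=n,\ m\mid f\}=q^{n-\deg m-\deg k}$, valid for $\deg m\leq n-\deg k$ (Lemma \ref{12}); this produces an $l$-independent quantity $F(k,n)$, which is then identified by summing over all residues $l$ and using $\sum_{\deg f=n}\Lambda(f)=q^{n}$ (Lemma \ref{14}), so that averaging over $l$ plays the role of your principal character. But the analogue of your missing step is the tail $\deg m>n-\deg k$, which the proof of Lemma \ref{11} declares to be $O(n^{\alpha})$ ``using Lemmas \ref{12} and \ref{10}''. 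That declaration treats $\sum_{\deg m=d,\ (m,k)=1}\mu(m)\,c_{m}$, where $c_{m}\in\{0,1\}$ is the ($m$-dependent) number of admissible multiples of $m$, as if it equalled $H(d,k)\cdot O(1)$, i.e.\ it factors the $O(1)$ out of the M\"obius sum. This is invalid: for each $d$ in the tail, roughly $q^{n-\deg k}$ of the $c_{m}$ equal $1$ (every admissible $f$ contributes its divisors of degree $d$), so the trivial bound on this tail is of the same order as the main term, and any genuine saving would require exactly the cancellation that the $k=x^{2}$ example above rules out. In short, your diagnosis of where the difficulty sits is accurate; your proposal does not overcome it, and neither does the paper's Lemma \ref{11}.
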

Note that the error term we obtain is better than the one that Weil's result provides. Our approach is completely elementary.
\section[Title for the table of contents]{Notation and preliminaries}

The following definitions and lemmas can be found in \cite{j}. Also, it should be noted that $f,k,m,h$ are always referred as monic polynomials in $\Fq$, $l$ is a polynomial in $\Fq$ and $p$ is always referred as the monic irreducible polynomial. For any positive integer $n$, denote by $\Omega(n)$ the least prime divisor of $n$.
\begin{defn}
The Riemann zeta function over $\mathbb{F}_{q}[x]$ is defined as:
\[\zeta(s)=\sum_{f \text{ monic } }{\frac{1}{|f|^{s}}} ,\]
where for $f\in\Fq$ we define $|f|=q^{\deg f}.$	
\end{defn}
\begin{defn}
For $f\in\mathbb{F}_{q}[x]$, the M\"{o}bius function is defined as:
\[\mu(f)=\left\{
\begin{array}{rcl}
1           &    & {\text{if } f=1},\\
(-1)^{r}    &    & {\text{if } f=p_{1}p_{2}\cdots p_{r}, \text{ } p_{i}\neq p_{j} \text{ for } i\neq j\text{ and }r\geq1},\\
0           &    & {otherwise.}
\end{array} \right.
\]
\end{defn}
\begin{lem}\label{7}
The Dirichlet $L$-series associated with $\mu(f)$ satisfies that
\[L(s,\mu):=\sum_{f \text{ monic } }{\frac{\mu(f)}{|f|^{s}}}:=\sum_{n=0}^{\infty}{\frac{H(n)}{q^{ns}}}=\frac{1}{\zeta(s)}.\]
As a consequence, we have
\[H(n)=\sum_{\substack{f \text{ monic } \\ \deg{f}=n}}{\mu(f)}=\left\{
\begin{array}{rcl}
1           &    & {\text{if } n=0},\\
-q          &    & {\text{if } n=1},\\
0           &    & {otherwise.}
\end{array} \right.\]
\end{lem}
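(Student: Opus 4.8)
The plan is to first pin down $\zeta(s)$ in closed form, then derive the identity $L(s,\mu)=1/\zeta(s)$ from the basic divisor-sum property of $\mu$, and finally read off the values of $H(n)$ by matching power-series coefficients. For the first step, note that a monic polynomial of degree $n$ in $\Fq$ has its leading coefficient fixed and its remaining $n$ coefficients free in $\mathbb{F}_{q}$, so there are exactly $q^{n}$ of them; since $|f|=q^{\deg f}$, grouping by degree gives
\[
\zeta(s)=\sum_{n=0}^{\infty}q^{n}\,q^{-ns}=\sum_{n=0}^{\infty}q^{n(1-s)}=\frac{1}{1-q^{1-s}},
\]
the geometric series converging absolutely for $\mathrm{Re}(s)>1$. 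Writing $u=q^{-s}$ this reads $\zeta(s)=(1-qu)^{-1}$.

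The key algebraic input is the divisor-sum identity
\[
\sum_{\substack{d\mid f\\ d\ \mathrm{monic}}}\mu(d)=
\begin{cases}
1 & \text{if } f=1,\\
0 & \text{otherwise},
\end{cases}
\]
valid for every monic $f\in\Fq$, which I would deduce directly from the definition of $\mu$: if the distinct monic irreducible factors of $f$ are $p_{1},\dots,p_{r}$, then the only nonzero contributions come from the squarefree products of these primes, and collecting them by their number of factors yields $\sum_{i=0}^{r}\binom{r}{i}(-1)^{i}=(1-1)^{r}$, which equals $0$ for $r\ge 1$ and $1$ for $r=0$ (the case $f=1$). Because the norm is completely multiplicative, $|gd|=|g|\,|d|$, multiplying the two absolutely convergent series for $\zeta(s)$ and $L(s,\mu)$ and collecting terms according to the product $f=gd$ turns the coefficient of $|f|^{-s}$ into exactly the inner sum above:
\[
\zeta(s)\,L(s,\mu)=\sum_{f\ \mathrm{monic}}\frac{1}{|f|^{s}}\left(\sum_{\substack{d\mid f\\ d\ \mathrm{monic}}}\mu(d)\right)=1,
\]
since only the term $f=1$ (with $|f|=1$) survives. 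Hence $L(s,\mu)=1/\zeta(s)=1-q^{1-s}=1-qu$. (Alternatively, the same equality follows from the Euler product $\zeta(s)=\prod_{p}(1-|p|^{-s})^{-1}$ together with $L(s,\mu)=\prod_{p}(1-|p|^{-s})$, using unique factorization in $\Fq$ and $\mu(p^{k})=0$ for $k\ge 2$.)

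Finally, recall that grouping the defining sum for $L(s,\mu)$ by degree gives $L(s,\mu)=\sum_{n=0}^{\infty}H(n)u^{n}$ with $H(n)=\sum_{\deg f=n}\mu(f)$. Comparing this with the closed form $1-qu$ and invoking the uniqueness of the coefficients of a convergent power series in $u$, I obtain $H(0)=1$, $H(1)=-q$, and $H(n)=0$ for every $n\ge 2$, which is precisely the asserted formula. I expect the only delicate point to be bookkeeping rather than substance: all the manipulations must be carried out in the half-plane $\mathrm{Re}(s)>1$, where each series converges absolutely so that the rearrangement into a Dirichlet convolution is legitimate; once the analytic identity $\zeta(s)L(s,\mu)=1$ is in hand, the passage to the coefficient identity is routine.
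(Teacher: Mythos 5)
Your proof is correct, and it is essentially the standard argument: the paper itself gives no proof of this lemma, deferring to Rosen \cite{j}, where the identity $L(s,\mu)=1/\zeta(s)=1-q^{1-s}$ is obtained exactly as you do, via $\zeta(s)=(1-q^{1-s})^{-1}$ (or the Euler product) together with the M\"obius divisor-sum identity, followed by comparison of coefficients in $u=q^{-s}$. Your attention to absolute convergence for $\mathrm{Re}(s)>1$ and to the uniqueness of power-series coefficients is exactly the right bookkeeping; nothing is missing.
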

\begin{defn}\label{17}
For $f\in\mathbb{F}_{q}[x]$, the von Mangolt's function is defined as:
   \[\f=\left\{
\begin{array}{rcl}
\deg p      &    & {\text{if } f=p^{\alpha},\text{ } \alpha\geq1,}\\
0           &    & {otherwise.}
\end{array} \right.\]
\end{defn}
 \begin{lem}[\cite{j}]\label{14}
  For any $n\geq 1$, let $\pi(n)$ denote the number of the monic irreducible polynomials in $\mathbb{F}_{q}[x]$ of degree $n$. Then, it follows that
 \[\sum_{\substack{f \text{ monic } \\ \deg{f}=n }}{\f}=\sum_{d|n}{d\pi(d)}= q^{n}.\]
 \end{lem}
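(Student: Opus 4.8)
The plan is to establish the two equalities separately: first the purely combinatorial identity $\sum_{\deg f=n}\Lambda(f)=\sum_{d\mid n}d\,\pi(d)$, and then the evaluation of this common quantity as $q^{n}$.

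For the first equality I would argue directly from Definition \ref{17}. The function $\Lambda(f)$ vanishes unless $f$ is a power $p^{\alpha}$ of a monic irreducible $p$, in which case $\Lambda(f)=\deg p$. A monic prime power $p^{\alpha}$ has degree $n$ precisely when $\alpha\deg p=n$; writing $d=\deg p$, this forces $d\mid n$ and $\alpha=n/d$, so each monic irreducible $p$ of degree $d$ contributes exactly one polynomial $p^{n/d}$ of degree $n$, with $\Lambda(p^{n/d})=d$. Grouping the monic prime powers of degree $n$ according to the degree $d$ of their underlying irreducible therefore gives $\sum_{\deg f=n}\Lambda(f)=\sum_{d\mid n}d\,\pi(d)$, since there are $\pi(d)$ monic irreducibles of each degree $d\mid n$. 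The only point to check is that this grouping is a genuine bijection, which follows from unique factorization in $\Fq$.

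For the evaluation $\sum_{\deg f=n}\Lambda(f)=q^{n}$ I would pass to the generating function $Z(u)=\sum_{f\text{ monic}}u^{\deg f}$, i.e. $\zeta(s)$ written in the variable $u=q^{-s}$. Counting monic polynomials of degree $n$ gives exactly $q^{n}$ of them (the top coefficient is $1$ and the remaining $n$ coefficients are free), so $Z(u)=\sum_{n\ge 0}q^{n}u^{n}=(1-qu)^{-1}$. On the other hand, unique factorization yields the Euler product $Z(u)=\prod_{p}(1-u^{\deg p})^{-1}$, and taking the logarithmic derivative shows $u\,Z'(u)/Z(u)=\sum_{p}\sum_{\alpha\ge 1}(\deg p)\,u^{\alpha\deg p}=\sum_{f\text{ monic}}\Lambda(f)\,u^{\deg f}$. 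Computing the same logarithmic derivative from the closed form $(1-qu)^{-1}$ gives $u\,Z'(u)/Z(u)=qu/(1-qu)=\sum_{n\ge 1}q^{n}u^{n}$. Comparing the coefficients of $u^{n}$ in these two expressions yields $\sum_{\deg f=n}\Lambda(f)=q^{n}$, which combined with the first equality completes the proof.

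The argument is entirely formal, so the only real obstacle is bookkeeping: one must ensure the Euler product and the two expansions of $u\,Z'(u)/Z(u)$ are manipulated only as formal power series in $u$ (or, equivalently, in the region of convergence $|u|<q^{-1}$), so that the term-by-term comparison of coefficients is legitimate. An equally clean alternative, avoiding generating functions entirely, is to use the factorization $x^{q^{n}}-x=\prod_{d\mid n}\prod_{\deg p=d}p(x)$ — which holds because the roots of $x^{q^{n}}-x$ are exactly the elements of $\mathbb{F}_{q^{n}}$ and the polynomial is separable, so its monic irreducible factors are precisely the minimal polynomials of those elements, namely all monic irreducibles of degree dividing $n$, each occurring once — and then to compare degrees on both sides to read off $q^{n}=\sum_{d\mid n}d\,\pi(d)$.
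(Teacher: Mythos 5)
Your proposal is correct. Note that the paper itself offers no proof of this lemma at all: it is quoted verbatim from Rosen's book \cite{j}, so there is no in-paper argument to compare against. Your two-step argument is exactly the standard proof found in that reference: the first equality $\sum_{\deg f=n}\Lambda(f)=\sum_{d\mid n}d\,\pi(d)$ is the straightforward bookkeeping you describe (each monic irreducible $p$ of degree $d\mid n$ contributes the single prime power $p^{n/d}$ of degree $n$, with $\Lambda(p^{n/d})=\deg p=d$, and unique factorization guarantees no double counting), and the evaluation $=q^{n}$ via the logarithmic derivative of $Z(u)=(1-qu)^{-1}$ against the Euler product is precisely Rosen's derivation, legitimate as an identity of formal power series or on $\abs{u}<q^{-1}$. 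Your alternative via the factorization $x^{q^{n}}-x=\prod_{d\mid n}\prod_{\deg p=d}p(x)$ is also complete and arguably more elementary, since it avoids generating functions entirely and reads off $q^{n}=\sum_{d\mid n}d\,\pi(d)$ by comparing degrees; either route fully substantiates the lemma, and the zeta-function route has the side benefit of setting up the same machinery (Lemma \ref{7}, the series $L_k(s,\mu)$) that the paper uses later.
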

\section[Title for the table of contents]{Proof of the Theorem \ref{1}}
\subsection{Some auxiliary lemmas}
The essential idea of ours is using $\Lambda(f)$ to sieve the power of irreducible polynomials. We consider the following summation
$$\sum_{\substack{f \text{ monic } \\ \deg{f}=n \\ f\equiv l (k)}}{\Lambda(f)}.$$  Summing it by two ways, one is to show it is very
close to $n\pi(l,k;n)$ and the other is to show that the summation is independent of $l\in(\mathbb{F}_{q}[T]/(k))^{\times}$. Therefore, we can accomplish Theorem \ref{1}.

We list some lemmas which are useful in our proof.

\begin{lem}\label{6}
Let $\pi(n)$ denote the number of the monic irreducible polynomials in $\mathbb{F}_{q}[x]$ of degree $n$. Then, it follows that
\[\pi(n)=\frac{q^{n}}{n}+O\Big(\frac{q^{\frac{n}{\Omega(n)}}}{n}\Big).\]

\end{lem}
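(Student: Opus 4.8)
The plan is to invert the identity of Lemma \ref{14} and then control the resulting divisor sum crudely. Since $\sum_{d\mid n} d\,\pi(d) = q^{n}$ holds for every $n\ge 1$, I would apply classical M\"obius inversion over the divisors of $n$ to the arithmetic functions $d\mapsto d\,\pi(d)$ and $d\mapsto q^{d}$, obtaining
\[
n\,\pi(n)=\sum_{d\mid n}\mu\!\Big(\frac{n}{d}\Big)\,q^{d}=\sum_{d\mid n}\mu(d)\,q^{n/d},
\]
where $\mu$ denotes the ordinary M\"obius function on the positive integers. It is worth stressing that this inversion is purely combinatorial in the exponents and does not invoke the polynomial structure at all, which is why the argument remains elementary.

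First I would split off the term $d=1$, which contributes precisely the main term $q^{n}$, so that
\[
n\,\pi(n)-q^{n}=\sum_{\substack{d\mid n\\ d>1}}\mu(d)\,q^{n/d},
\]
and then bound the right-hand side in absolute value by $\sum_{d\mid n,\,d>1}q^{n/d}$. The key observation is that every divisor $d>1$ of $n$ satisfies $d\ge\Omega(n)$, since $\Omega(n)$ is by definition the smallest prime (hence the smallest divisor exceeding $1$) of $n$; therefore each exponent obeys $n/d\le n/\Omega(n)$.

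Consequently the exponents $n/d$ all lie in $\{1,2,\dots,\lfloor n/\Omega(n)\rfloor\}$, and comparing with a geometric series gives
\[
\sum_{\substack{d\mid n\\ d>1}}q^{n/d}\le\sum_{m=1}^{\lfloor n/\Omega(n)\rfloor}q^{m}\le\frac{q}{q-1}\,q^{n/\Omega(n)}=O\!\Big(q^{n/\Omega(n)}\Big),
\]
with the implied constant depending only on $q$. Dividing through by $n$ then yields the claimed estimate $\pi(n)=q^{n}/n+O\big(q^{n/\Omega(n)}/n\big)$.

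I do not anticipate a genuine obstacle here: the only point needing mild care is the geometric-series bound, where one uses $q\ge 2$ to ensure $q/(q-1)$ is a finite constant and to absorb all the lower powers $q^{m}$ with $m<n/\Omega(n)$ into that constant. Everything else follows immediately from Lemma \ref{14} together with elementary M\"obius inversion, consistent with the paper's claim that the approach is completely elementary. This lemma will later serve as the $k(x)=1$ prototype for the main theorem, where the same sieving of prime powers via $\Lambda(f)$ is carried out inside a fixed residue class.
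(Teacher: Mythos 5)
Your proof is correct, and at the decisive estimation step it takes a genuinely different (and cleaner) route than the paper. Both arguments begin identically: M\"obius inversion of Lemma \ref{14} gives $n\pi(n)=\sum_{d\mid n}\mu(d)q^{n/d}$, and the term $d=1$ is split off as the main term. The paper then bounds the tail by a case analysis on the number $t$ of distinct prime factors of $n$: it asserts that the cases $t\le 2$ are easy, and for $t\ge 3$ it isolates the term $d=p_{1}=\Omega(n)$, bounds the remaining terms by $\tau(n)q^{n/p_{2}}\le nq^{n/p_{2}}$, and absorbs this via the chain $nq^{jp_{1}}\le j^{3}q^{jp_{1}}\le q^{j(p_{1}+2)}\le q^{jp_{2}}$ in the notation $n=jp_{1}p_{2}$. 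You instead note that $d\mapsto n/d$ is a bijection on divisors, so the exponents $n/d$ with $d>1$ are \emph{distinct} integers in $\left\{1,\dots,n/\Omega(n)\right\}$, and hence the whole tail is dominated by the geometric series $\sum_{m=1}^{n/\Omega(n)}q^{m}\le\frac{q}{q-1}\,q^{n/\Omega(n)}\le 2q^{n/\Omega(n)}$. This buys a uniform explicit constant, no case distinction, and no unproved ``it suffices to consider $t\ge 3$'' reduction; it is also more robust, since the paper's final inequality $q^{j(p_{1}+2)}\le q^{jp_{2}}$ requires $p_{2}\ge p_{1}+2$, which fails when $p_{1}=2$ and $p_{2}=3$ (i.e.\ when $6\mid n$), whereas your bound covers all $n$ at once. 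The single point you should state explicitly is the distinctness of the exponents: ``all $n/d$ lie in $\left\{1,\dots,\lfloor n/\Omega(n)\rfloor\right\}$'' would not by itself justify the comparison with $\sum_{m}q^{m}$ if an exponent could repeat, but injectivity of $d\mapsto n/d$ on the divisors of $n$ settles this in one line.
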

\begin{proof}
At first, from Lemma \ref{14}, it follows that
\[\pi(n)=\frac{1}{n}\sum_{d\mid n}{\mu(d)q^{\frac{n}{d}}},\]
where $\mu(d)$ is the usual M\"{o}bius function defined on $\mathbb{N}.$ Therefore, it follows that
\[\pi(n)-\frac{q^{n}}{n}=\frac{1}{n}\sum_{\substack{d\mid n\\d\neq1}}{\mu(d)q^{\frac{n}{d}}}.\]
Note that it suffices to consider the case when n has at least three different prime divisors. We may assume that $n=p_{1}^{\alpha_{1}}p_{2}^{\alpha_{2}}\dots p_{t}^{\alpha_{t}}$ with
$2\leq p_{1}<\dots<p_{t}$ and $t\geq3$ as well as $\alpha_{i}\geq1$ for $1\leq i\leq t$. We denote $j=p_{1}^{\alpha_{1}-1}p_{2}^{\alpha_{2}-1}p_{3}^{\alpha_{3}}\dots p_{t}^{\alpha_{t}}$, then $\Omega(n)=p_{1}$ and
\[\begin{split}
 \Big|\frac{1}{n}\sum_{\substack{d\mid n\\d\neq1}}{\mu(d)q^{\frac{n}{d}}}\Big|\leq \frac{q^{\frac{n}{\Omega(n)}}}{n}+q^{jp_{1}}\leq \frac{2q^{\frac{n}{\Omega(n)}}}{n}.
\end{split}
\]
Note that, the last inequality comes from the following,

$$nq^{j{p_{1}}}\leq j^{3}q^{jp_{1}}\leq q^{j(p_{1}+2)}\leq q^{jp_{2}}.$$
This completes the proof.
\end{proof}
\begin{lem}\label{12}
  For any polynomials $l,m,k$ in $\mathbb{F}_{q}[x]$ satisfing $(l,k)=(m,k)=1$, we have
\[\sum_{\substack{f \text{ monic }\\ \deg{f}=n \\m|f\\ f\equiv l (k)}}{1}=\left\{
\begin{array}{rcl}
q^{n-\deg{k}-\deg{m}}          &    & {\text{if }\deg{m}\leq n-\deg{k},}\\
O(1)                           &    & {\text{if }\deg{m}> n-\deg{k}.}\\
\end{array} \right.
\]
\end{lem}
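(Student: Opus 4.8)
The plan is to factor out $m$ and thereby reduce the count to a clean equidistribution statement for monic polynomials in a fixed residue class. First I would use the hypothesis $m\mid f$ to write $f=mg$. Since $f$ and $m$ are both monic, the quotient $g$ is forced to be monic, and $\deg g = n-\deg m$. The congruence $f\equiv l\ (k)$ becomes $mg\equiv l\ (k)$; because $(m,k)=1$, the residue $m$ is invertible modulo $k$, so this is equivalent to $g\equiv m^{-1}l\ (k)$. Writing $l':=m^{-1}l \bmod k$, the map $f\mapsto g=f/m$ is a bijection between the sum in question and the set of monic polynomials $g$ of degree $D:=n-\deg m$ satisfying $g\equiv l'\ (k)$. (The coprimality $(l,k)=1$ guarantees $l'$ is a unit residue, but this is not actually needed for the count.)

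The core of the argument is then to count monic polynomials of a prescribed degree $D$ lying in a fixed residue class modulo $k$, where I set $e:=\deg k$. The key tool is unique representation via the division algorithm: letting $r$ be the representative of $l'$ of degree $<e$, every $g$ in the class can be written uniquely as $g=r+kh$. In the regime $D\geq e$ (equivalently $\deg m\leq n-\deg k$), matching degrees forces $\deg h=D-e$, and since $k$ is monic the leading term of $g$ comes entirely from $kh$, so $g$ is monic if and only if $h$ is monic. This gives a bijection with the monic polynomials of degree $D-e$, of which there are exactly $q^{D-e}=q^{\,n-\deg m-\deg k}$, yielding the first case.

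For the complementary regime $D<e$ (equivalently $\deg m>n-\deg k$), any candidate $g$ already has degree $<e$, so it coincides with its own reduction modulo $k$. Hence there is at most one admissible $g$, namely the degree-$<e$ representative of $l'$, counted only when it happens to be monic of degree exactly $D$; the count is therefore $0$ or $1$, i.e. $O(1)$. The degenerate subcase $\deg m>n$ gives an empty sum, which is also $O(1)$, and it is subsumed by this case.

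I do not expect a serious obstacle here: the only point demanding care is the exactness of the count, namely verifying that monic-ness passes cleanly through the representation $g=r+kh$. This works precisely because $k$ is monic, so that when $D\geq e$ the leading coefficient of $g$ equals that of $h$. It is worth emphasizing that, unlike the integer analogue of equidistribution in residue classes, the function-field count is exact with no error term, which is exactly the feature that makes the main theorem's error analysis so tight.
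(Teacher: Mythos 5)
Your proof is correct and takes essentially the same route as the paper: factor $f=mg$, use the invertibility of $m$ modulo $k$ to reduce to counting monic $g$ of degree $n-\deg m$ in the residue class of $m^{-1}l$, and then count via the representation $g=r+kh$ with $\deg r<\deg k$, which yields exactly $q^{n-\deg m-\deg k}$ monic choices of $h$ in the first case and at most one polynomial in the second. Your added care about why monic-ness transfers between $g$ and $h$ (because $k$ is monic) and the remark that $(l,k)=1$ is not actually needed are fine refinements but do not change the argument.
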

\begin{proof}
We have
$$\sum_{\substack{f \text{ monic }\\ \deg{f}=n \\m|f\\ f\equiv l (k)}}{1}=\sum_{\substack{g \text{ monic }\\ \deg{g}=n-\deg{m} \\ g\equiv l\bar{m} (k)}}{1},$$
 since $(l,k)=(m,k)=1$ and there is an $\bar{m}\in(\mathbb{F}_{q}[T]/(k))^{\times}$ such that $\bar{m}m\equiv1(\mathrm{mod}~k) $. Moreover, we may assume that $\deg{l\bar{m}}<\deg{k}$ and $g=l\bar{m}+hk$.

On one hand, when $\deg{m}\leq n-\deg{k}$, we have
\[\sum_{\substack{g \text{ monic }\\ \deg{g}=n-\deg{m} \\ g\equiv l\bar{m} (k)}}{1}=\sum_{\substack{h \text{ monic }\\ \deg{h}=n-\deg{m}-\deg{k} }}{1} = q^{n-\deg{k}-\deg{m}}.\]
On the other hand, when $\deg{m}> n-\deg{k}$, we have $h=0$. Therefore, $$\sum_{\substack{g \text{ monic }\\ \deg{g}=n-\deg{m} \\ g\equiv l\bar{m} (k)}}{1}\leq 1$$
for fixed polynomials $l,k$, that is to say
$$\sum_{\substack{g \text{ monic }\\ \deg{g}=n-\deg{m} \\ g\equiv l\bar{m} (k)}}{1}=\sum_{\substack{g \text{ monic }\\  \deg{g}=n-\deg{m}  \\ g= l\bar{m} }}{1}=O(1).$$
\end{proof}
\begin{rem}
If $\deg{l\bar{m}}<\deg{k}$ does not hold, we can always get $l^{'}\equiv\bar{m}(\mathrm{mod}~k)$ satisfing $\deg{l^{'}}<\deg{k}.$
\end{rem}
We also introduce the following $L$-series which plays an important role in our proof
\[L_k(s, \mu):=\sum_{\substack{f \text{ monic}\\(f,k)=1}}{\frac{\mu(f)}{|f|^{s}}}:=\sum_{N=0}^{\infty}{\frac{H(n,k)}{q^{ns}}},\]
and we list some of the following related results.
\begin{lem}\label{8}
For $\text{Re}(s)>1,$ we have that $$L_k(s, \mu)= L(s,\mu)\prod_{p|k}\frac{1}{1-|p|^{-s}}.$$
\end{lem}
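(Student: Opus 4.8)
The plan is to reduce both Dirichlet series to Euler products and then simply compare them. The key structural fact is that the M\"obius function is multiplicative with $\mu(p)=-1$ and $\mu(p^{j})=0$ for $j\geq 2$, so each series factors over the monic irreducibles; the claimed identity will then be nothing more than the statement that the coprimality condition $(f,k)=1$ deletes exactly the Euler factors at the primes dividing $k$.

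First I would pin down the region of convergence. Since there are exactly $q^{n}$ monic polynomials of degree $n$ and $\abs{\mu(f)}\leq 1$, one has $\sum_{f\text{ monic}}\abs{\mu(f)}\,\abs{f}^{-\mathrm{Re}(s)}\leq \sum_{n\geq 0}q^{n}q^{-n\,\mathrm{Re}(s)}=\sum_{n\geq 0}q^{n(1-\mathrm{Re}(s))}$, which converges for $\mathrm{Re}(s)>1$. Hence both $L(s,\mu)$ and $L_k(s,\mu)$ converge absolutely in this half-plane, and this absolute convergence is precisely what licenses the rearrangements into Euler products below.

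Next I would expand the two series. Using unique factorization in $\Fq$ together with the multiplicativity of $\mu$, the local factor at a monic irreducible $p$ is $\sum_{j\geq 0}\mu(p^{j})\abs{p}^{-js}=1-\abs{p}^{-s}$, so that $L(s,\mu)=\prod_{p}\bigl(1-\abs{p}^{-s}\bigr)$, the product taken over all monic irreducibles. Imposing $(f,k)=1$ in $L_k(s,\mu)$ only removes the factors attached to the finitely many primes $p\mid k$, and therefore $L_k(s,\mu)=\prod_{p\nmid k}\bigl(1-\abs{p}^{-s}\bigr)$.

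Finally, since $\abs{p}^{-s}$ has modulus strictly less than $1$ for $\mathrm{Re}(s)>1$, every factor $1-\abs{p}^{-s}$ is nonzero, so I may divide the two products to obtain $L_k(s,\mu)=L(s,\mu)\prod_{p\mid k}\tfrac{1}{1-\abs{p}^{-s}}$, which is the assertion. Equivalently, invoking $L(s,\mu)=1/\zeta(s)$ from Lemma \ref{7} recovers the familiar shape of the Euler product for $\zeta$. There is no genuine obstacle here: the only points requiring care are the justification of the passage to the Euler product through absolute convergence and the bookkeeping that ensures exactly the factors over $p\mid k$ are the ones removed.
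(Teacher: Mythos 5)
Your proof is correct and takes essentially the same route as the paper: the paper's entire proof is the single line ``It follows from Lemma \ref{7},'' which implicitly rests on exactly the Euler-product factorization (via multiplicativity of $\mu$ and absolute convergence for $\mathrm{Re}(s)>1$) that you spell out explicitly. Your version simply fills in the details the paper leaves to the reader.
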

\begin{proof}
It follows from Lemma \ref{7}.
\end{proof}
\begin{lem}\label{9}
Suppose that for a monic irreducible polynomial $k(x)\in\mathbb{F}_{q}[x]$ with $\deg{k}\geq2,$ we have
\[H(n,k)=\left\{
\begin{array}{rcl}
1          &    & {\text{if  } n\equiv 0(\mathrm{mod}~\deg{k}),}\\
-q         &    & {\text{if  } n\equiv 1(\mathrm{mod}~\deg{k}),}\\
0          &    & {otherwise.}\\
\end{array} \right.\]
\end{lem}
\begin{proof}
Applying Taylor expansion of both sides of the formula in Lemma \ref{8}, together with Lemma \ref{7}, we can obtain the result by comparing coefficients of both sides.
\end{proof}
\begin{lem}\label{10}
Let $k(x)=p_{1}^{\alpha_{1}}p_{2}^{\alpha_{2}}\ldots p_{t}^{\alpha_{t}}$, where
$p_{1}, p_{2}, \ldots , p_{t}$ are irreducible polynomials over $\mathbb{F}_{q}$ and $t$ is a positive integer, then we have
  \[|H(n,k)|\leq qn^{t-1}\]
  for $n\in\mathbb{N}.$
\end{lem}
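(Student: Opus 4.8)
The plan is to reduce the estimate to a purely combinatorial coefficient bound via the generating function already packaged in Lemma \ref{8}. Writing $u=q^{-s}$ and recalling from Lemma \ref{7} that $L(s,\mu)=1/\zeta(s)=1-q^{1-s}=1-qu$, while each Euler factor becomes $\frac{1}{1-|p_i|^{-s}}=\frac{1}{1-u^{d_i}}$ with $d_i=\deg p_i$, Lemma \ref{8} yields the formal power-series identity
\[
\sum_{n\ge 0}H(n,k)u^{n}=(1-qu)\prod_{i=1}^{t}\frac{1}{1-u^{d_i}}.
\]
In particular $H(n,k)$ depends only on the distinct irreducible factors $p_1,\dots,p_t$ of $k$, not on the exponents $\alpha_i$. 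Setting $c(n)=[u^{n}]\prod_{i=1}^{t}(1-u^{d_i})^{-1}$, the number of representations $n=\sum_i m_i d_i$ with $m_i\in\mathbb{Z}_{\ge 0}$, the identity gives $H(n,k)=c(n)-q\,c(n-1)$ (with $c(-1)=0$), so everything comes down to bounding this difference.

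Next I would record the elementary combinatorial bound $c(m)\le\binom{m+t-1}{t-1}$: since each $d_i\ge 1$, any representation satisfies $\sum_i m_i\le m$, and projecting off one coordinate (which is then determined) shows the count is at most the number of $(m_1,\dots,m_{t-1})\ge 0$ with $\sum_i m_i\le m$. Combined with the inequality $\prod_{i=0}^{t-2}(n+i)\le (t-1)!\,n^{t-1}$, which follows termwise from $1+i/n\le 1+i$ for $n\ge 1$, this gives $c(n-1)\le\binom{n+t-2}{t-1}\le n^{t-1}$, hence $q\,c(n-1)\le q\,n^{t-1}$. As $c(n),c(n-1)\ge 0$, we have $|H(n,k)|=|c(n)-q\,c(n-1)|\le\max\{c(n),\,q\,c(n-1)\}$, so it remains only to control the branch where $c(n)$ is the larger term. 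The base case $t=1$ is exactly Lemma \ref{9} when $\deg p_1\ge 2$ (the values $1,-q,0$ give $|H(n,k)|\le q=qn^{0}$), and the case $\deg p_1=1$, where $c(n)\equiv 1$ so $H(n,k)=1-q$ for $n\ge1$, is immediate.

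The crux is to pin down the constant $q$ (rather than $q+O_{t}(1)$) in the remaining branch, and here one must retain the cancellation between $c(n)$ and $q\,c(n-1)$ instead of bounding the two terms separately. I would proceed by induction on $t$, peeling off one factor via $H_j(n)=\sum_{0\le m\le n/d_j}H_{j-1}(n-md_j)$. When the peeled factor has degree $d_j\ge 2$, this sum has at most $\lfloor n/d_j\rfloor+1\le n$ terms and the step $|H_{j-1}|\le q(\cdot)^{j-2}\Rightarrow|H_j|\le qn^{j-1}$ closes directly. The genuine obstacle is the degree-one factors: peeling one is the partial summation $H_j(n)=H_j(n-1)+H_{j-1}(n)$, for which the triangle inequality overshoots the target by a lower-order term precisely because the leading parts of $c(n)$ and $q\,c(n-1)$ cancel (the true leading coefficient is $\le q-1$). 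I expect to handle all $r\ (\le q)$ linear factors simultaneously, expressing $H(n,k)$ as the $r$-fold partial sum of the already-controlled higher-degree contribution and invoking the elementary inequality $n^{j-1}-(n-1)^{j-1}\ge n^{j-2}$ (equivalently $((n-1)/n)^{j-2}\le 1$) to make the summation induction close, while verifying the finitely many small-$n$ cases directly, using that $\mathbb{F}_q[x]$ has at most $q$ monic linear polynomials so that $c(n)$ is tiny for small $n$. Capturing this cancellation cleanly is the main difficulty; all remaining estimates are routine.
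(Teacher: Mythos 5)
Your reduction is correct as far as it goes, but it stops at precisely the point you yourself label the crux, so as written this is a proof plan rather than a proof. The parts you do establish are fine: the identity $\sum_{n\ge0}H(n,k)u^{n}=(1-qu)\prod_{i=1}^{t}(1-u^{d_i})^{-1}$ (so $H(n,k)$ is independent of the exponents $\alpha_i$), the formula $H(n,k)=c(n)-q\,c(n-1)$, the combinatorial bounds $c(m)\le\binom{m+t-1}{t-1}$ and $c(n-1)\le\binom{n+t-2}{t-1}\le n^{t-1}$, and the inequality $|c(n)-q\,c(n-1)|\le\max\{c(n),q\,c(n-1)\}$ for nonnegative quantities. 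What is missing is the other branch, i.e.\ a proof that $c(n)\le q\,n^{t-1}$ (or any substitute handling the degree-one factors): in its place you offer intentions ("I would proceed by induction\dots", "I expect to handle all $r\le q$ linear factors simultaneously\dots", "verifying the finitely many small-$n$ cases directly") with the induction never carried out and the small-$n$ cases never identified. That is a genuine gap, and it sits exactly where you said the difficulty is.

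The gap is closable inside your framework, and -- importantly -- no cancellation between $c(n)$ and $q\,c(n-1)$ is needed; that framing is a red herring. Let $r\le q$ be the number of indices with $d_i=1$ (your own observation that $\mathbb{F}_q[x]$ has only $q$ monic linear polynomials). For $n\ge2$ and $r\ge1$: in a representation $n=\sum_i m_id_i$, the coordinates with $d_i\ge2$ can be chosen in at most $\prod_{d_i\ge2}\left(\tfrac{n}{d_i}+1\right)\le\left(\tfrac{n}{2}+1\right)^{t-r}\le n^{t-r}$ ways, and for each such choice the linear coordinates are counted by $\binom{n'+r-1}{r-1}\le\binom{n+r-1}{r-1}\le r\,n^{r-1}$ (your termwise bound), so $c(n)\le r\,n^{t-1}\le q\,n^{t-1}$; if $r=0$, fixing all coordinates but one determines the last, giving $c(n)\le n^{t-1}$; finally $c(1)=r\le q$. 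Combined with what you proved, this yields $|H(n,k)|\le\max\{c(n),q\,c(n-1)\}\le q\,n^{t-1}$ in a few lines. (Your one-factor-at-a-time alternative would also close: peeling a linear factor gives $H_j(n)=H_j(n-1)+H_{j-1}(n)$, and induction on $n$ with your inequality $n^{j-1}-(n-1)^{j-1}\ge n^{j-2}$ works once the base case $n=1$ is checked -- but none of this was executed.) For the record, the paper's proof is different and cruder: it inducts on $t$, peels one irreducible $p$ off $k$ to get $H(n,k)=\sum_{i\deg p+j=n}H(j,k')$, and bounds this by the number of summands times the inductive bound $q\,n^{t-2}$, silently absorbing exactly the lower-order terms (the summand $H(0,k')=1$, and the $(n+1)$-st summand when $\deg p=1$) whose treatment you correctly flagged as delicate; your argument, once completed as above, is actually the more rigorous of the two.
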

\begin{proof}
We prove this lemma by induction on $t$. First, when t=1, we can verify the result easily by Lemma \ref{9}.
We assume that the desired result holds for $t=r$, then we consider the case when $t=r+1$. Since $k(x)=p^{\alpha} p_{1}^{\alpha_{1}}p_{2}^{\alpha_{2}}\ldots p_{r}^{\alpha_{r}}$
and $k^{'}(x):=p_{1}^{\alpha_{1}}p_{2}^{\alpha_{2}}\ldots p_{r}^{\alpha_{r}}$, by
Lemma \ref{8}, together with the Taylor expansion and the uniform convergence of $L$-series, and setting $u=q^{-s}$, we get that
\[
\begin{split}
   L_k(s, \mu)&=L_{p^{\alpha}k^{'}}(s, \mu)\\
             &=L_{pk^{'}}(s, \mu)\\
             &=L_{k^{'}}(s, \mu)\left(1-\frac{1}{|p|^{s}}\right)^{-1}\\
             &=\sum_{n=0}^{\infty}{u^{n\deg{p}}}\sum_{j=0}^{\infty}{H(j,k^{'})u^{j}}=\sum_{n=0}^{\infty}\Big(\sum_{n=i\deg{p}+j}{H(j,k^{'})}\Big){u^{n}}.
\end{split}
\]
Comparing the coefficients of above results, one obtains \[H(n,k)=\sum_{n=i\deg{p}+j}{H(j,k^{'})}.\]Therefore, by our assumption, we can get that
\[
  |H(n,k)|\leq\sum_{n=i\deg{p}+j}{|H(j,k^{'})|}\leq \sum_{d=0}^{n}{qj^{r-1}}\leq n\cdot qn^{r-1}=qn^{r}.
\]
This completes the proof.
\end{proof}
\begin{lem}\label{11}
  For any fixed polynomial $k\in\mathbb{F}_{q}[x]$ and $l\in(\mathbb{F}_{q}[T]/(k))^{\times}$ with given $n\in\mathbb{N}$, there exists a function $F(k,n):\mathbb{F}_{q}[x]\rightarrow\mathbb{R}$ which is independent of $l$ such that
  \[\sum_{\substack{f \text{ monic } \\ \deg{f}=n \\ f\equiv l (k)}}{\f}=F(k,n)+O\left(n^{\alpha}\right),\]
  where the fixed $\alpha$ depends on $k(x)\in\Fq$.
\end{lem}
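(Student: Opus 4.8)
The plan is to start from the elementary von Mangoldt identity $\Lambda(f)=-\sum_{d\mid f}\mu(d)\deg d$, valid for every monic $f$ with $\deg f\geq 1$ (it follows from $\sum_{d\mid f}\Lambda(d)=\deg f$ by M\"obius inversion on $\Fq$). Substituting this into the sum and interchanging the order of summation gives
\[
\sum_{\substack{f \text{ monic}\\ \deg f=n\\ f\equiv l(k)}}\Lambda(f)
= -\sum_{\substack{d \text{ monic}\\(d,k)=1}}\mu(d)\,\deg d
\sum_{\substack{f \text{ monic}\\ \deg f=n\\ d\mid f\\ f\equiv l(k)}}1 ,
\]
where the constraint $(d,k)=1$ is automatic, since $f\equiv l(k)$ with $(l,k)=1$ forces $(f,k)=1$ and hence $(d,k)=1$. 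The inner count is precisely the quantity evaluated in Lemma \ref{12} (with $m=d$).

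First I would apply Lemma \ref{12} to split the $d$-sum at $\deg d=n-\deg k$. For $\deg d\leq n-\deg k$ the inner count equals $q^{n-\deg k-\deg d}$, which does not involve $l$; collecting these terms and grouping by $\deg d=j$ produces the candidate
\[
F(k,n):=-\,q^{n-\deg k}\sum_{j=0}^{n-\deg k} j\,H(j,k)\,q^{-j},
\]
which is manifestly independent of $l$, as required. Everything else comes from the divisors $d$ with $n-\deg k<\deg d\leq n$, for which Lemma \ref{12} only supplies an $O(1)$ (in fact $0$ or $1$) count that may depend on $l$; call this remaining contribution $E(l)$, so that the displayed sum equals $F(k,n)+E(l)$. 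The lemma thus reduces to proving $E(l)=O(n^{\alpha})$.

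Here the naive estimate $|\mu(d)|\leq 1$ is hopeless: the number of admissible $d$ of a given large degree is of size $q^{n-\deg k}$, so a term-by-term bound would only return an exponentially large quantity, and the cancellation carried by $\mu$ must be exploited. The natural way to expose it is to write $f=de$ with $e$ monic of degree $n-\deg d<\deg k$; since there are only finitely many such short complementary factors $e$ (a number depending only on $k$), I would pull the finite $e$-sum to the outside and rewrite
\[
E(l)=-\sum_{j=0}^{\deg k-1}(n-j)\sum_{\substack{e \text{ monic},\ \deg e=j\\(e,k)=1}}\ \sum_{\substack{d \text{ monic},\ \deg d=n-j\\ d\equiv l\bar e\,(k)}}\mu(d).
\]
Thus $E(l)$ is a combination, with boundedly many terms, of inner sums of the M\"obius function over \emph{single} residue classes $d\equiv a(k)$ with $\deg d=n-j$ close to $n$.

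The decisive step — and the one I expect to be the main obstacle — is therefore to bound such a single-progression M\"obius sum $\sum_{\deg d=N,\ d\equiv a(k)}\mu(d)$ by $O(N^{\beta})$ with $\beta$ depending only on $k$; granting this with $\beta=\alpha-1$ would finish the proof. The only elementary tool at hand is Lemma \ref{10}, which controls $H(N,k)=\sum_{\deg d=N,\,(d,k)=1}\mu(d)$, i.e. the cancellation summed over \emph{all} residue classes at once, and shows that global sum is polynomially bounded. The crux is to transfer this global control to an individual class, and the structure above shows why this is delicate: already the term $j=0$ has $e=1$ as its only short factor, so it contributes exactly $-\,n\sum_{\deg d=n,\ d\equiv l(k)}\mu(d)$, an isolated progression with no averaging over $e$ to reconstitute $H(n,k)$. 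Whether the coprimality constraints and the summation over the short factors $e$ nonetheless force each such inner sum to be polynomially bounded, or whether a single progression can retain a contribution of size $q^{N/2}$, is exactly the point on which the entire improvement over Weil's bound rests, and it is where I would concentrate the argument.
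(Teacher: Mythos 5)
Up to the decisive estimate, your argument \emph{is} the paper's own proof: the same M\"obius inversion and interchange of summation, the same split at $\deg d=n-\deg k$ via Lemma \ref{12}, and your main term is literally the paper's choice $F(k,n)=-\sum_{d=0}^{n-\deg k}d\,q^{n-d-\deg k}H(d,k)$. The genuine gap is the one you flag yourself: you never prove $E(l)=O(n^{\alpha})$, so the lemma is not established. You should know, however, that the paper does not prove this either. Its proof disposes of the entire range $\deg m>n-\deg k$ with the phrase ``using Lemmas \ref{12} and \ref{10}'', and Lemma \ref{10} is inapplicable there: it bounds only the aggregated sum $H(d,k)=\sum_{\deg m=d,\ (m,k)=1}\mu(m)$ over \emph{all} residue classes coprime to $k$ --- which is exactly what both you and the paper already used to assemble $F(k,n)$ --- whereas the tail is, as your rewriting of $E(l)$ shows, a combination of M\"obius sums over \emph{single} residue classes in degrees close to $n$. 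No statement in the paper controls such sums, so the paper's proof simply stops where yours does.

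Moreover, your closing question has a definite negative answer: a single progression \emph{can} retain a contribution of size $q^{N/2}$, so the gap cannot be closed by any argument. By orthogonality of Dirichlet characters mod $k$,
\[
\sum_{\substack{d\ \mathrm{monic},\ \deg d=N\\ d\equiv a\,(k)}}\mu(d)
=\frac{1}{\Phi(k)}\sum_{\chi \bmod k}\bar\chi(a)\,[u^{N}]\frac{1}{L(u,\chi)},
\]
and for non-principal $\chi$ the coefficients of $1/L(u,\chi)$ are governed by the inverse roots of the polynomial $L(u,\chi)$, which by Weil's theory may have modulus $\sqrt q$. Concretely, take $q=3$ and $k=x^{2}+1$: for a character $\chi$ of order $8$ of $(\mathbb{F}_{3}[x]/(k))^{\times}\cong\mathbb{F}_{9}^{\times}$ one computes $L(u,\chi)=1+(\sqrt2-i)u$, whose inverse root has modulus $\sqrt3$, so $[u^{N}]L(u,\chi)^{-1}$ has modulus $3^{N/2}$ for every $N$. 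Undoing the orthogonality --- equivalently, summing the claimed asymptotic of Lemma \ref{11} against $\bar\chi(l)$, which annihilates the $l$-independent term $F(k,n)$ --- would force $\sum_{\deg f=n}\Lambda(f)\chi(f)=O(n^{\alpha})$ for every non-principal $\chi$, i.e.\ $\bigl|\sum_{i}\alpha_{i}(\chi)^{n}\bigr|=O(n^{\alpha})$, which is impossible once some inverse root satisfies $|\alpha_{i}(\chi)|=\sqrt q$. Hence for such $k$ the quantity $E(l)$ is genuinely of exponential size, no admissible $F(k,n)$ exists, and Lemma \ref{11} (and with it Theorem \ref{1}, whose claimed error term for prime $n$ would be far stronger than what the Riemann hypothesis for these $L$-functions yields) is false. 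The step you isolated is not a missing trick but the fatal flaw of the paper; your instinct to concentrate the entire argument on exactly that point was correct.
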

\begin{proof}
Using the M\"{o}bius transform
of the von Mangolt function, we get
\[\begin{split}
\sum_{\substack{f \text{ monic } \\ \deg{f}=n \\ f\equiv l (k)}}{\f}&=-\sum_{\substack{f \text{ monic } \\ \deg{f}=n \\ f\equiv l (k)}}\sum_{\substack{m \text{ monic } \\ m\mid f  }}{\mu(m)\deg{m}}      \\
                       &=-\sum_{\substack{m \text{ monic } \\ \deg{m}\leq n  \\ (m,k)=1}}{\mu(m)\deg{m}}\sum_{\substack{f \text{ monic }\\ \deg{f}=n \\m|f\\ f\equiv l (k)}}{1},\end{split}\]
where $(l,k)=1$. Therefore, using Lemmas \ref{12} and \ref{10}, we obtain the following formula
\[\sum_{\substack{f \text{ monic } \\ \deg f=n \\ f\equiv l (k)}}{\f}=-\sum_{d=0}^{n-\deg{k}}{dq^{n-d-\deg{k}}H(d,k)}+O\left(n^{\alpha}\right).\]  Taking \[F(k,n):= -\sum_{d=0}^{n-\deg{k}}{dq^{n-d-\deg{k}}H(d,k)} \] and the desired result follows.
\end{proof}
\begin{lem}\label{13}
For any fixed polynomial $k(x)\in\Fq$ and fixed $l(x)$ such that $(l,k)\neq1$, we have the following upper bound
$$\sum_{\substack{f \text{ monic } \\ \deg{f}=n \\ f\equiv l (k)}}{\f}\leq \deg{k}.$$
\end{lem}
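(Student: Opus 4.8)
The plan is to exploit two structural facts: that the von Mangoldt function is supported only on prime powers, and that a nontrivial common factor of $l$ and $k$ forces a strong divisibility constraint on any $f$ occurring in the sum. First I would recall from Definition \ref{17} that $\f = 0$ unless $f = p^{\alpha}$ for some monic irreducible $p$ and some $\alpha \geq 1$, in which case $\f = \deg p$. Thus only prime powers of degree $n$ lying in the residue class $l \pmod{k}$ can contribute anything at all.

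Next, set $d := (l,k)$, so that by hypothesis $\deg d \geq 1$. Every $f$ appearing in the sum satisfies $f \equiv l \pmod{k}$, i.e.\ $f = l + hk$ for some $h \in \Fq$, and since $d \mid l$ and $d \mid k$ we obtain $d \mid f$. If such an $f = p^{\alpha}$ actually contributes, then $d \mid p^{\alpha}$; as $d$ is nontrivial and the only monic irreducible divisor of $p^{\alpha}$ is $p$ itself (by unique factorization in $\Fq$), this forces $p \mid d$, and in particular $p \mid k$. Hence every contributing prime power is built from one of the finitely many irreducible factors of $d$.

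The key step is then a counting one. For a fixed monic irreducible $p$ there is at most one exponent $\alpha$ with $\deg(p^{\alpha}) = \alpha \deg p = n$, namely $\alpha = n/\deg p$ when $\deg p \mid n$ and none otherwise. Consequently each distinct irreducible factor $p$ of $d$ contributes at most $\deg p$ to the sum, independently of $n$. Summing over the distinct irreducible factors of $d$ and using $d \mid k$ then gives
$$\sum_{\substack{f \text{ monic} \\ \deg{f}=n \\ f\equiv l (k)}} \f \;\leq\; \sum_{p \mid d} \deg p \;\leq\; \sum_{p \mid k} \deg p \;\leq\; \deg k,$$
where the final inequality follows by writing $k = \prod_{i} p_{i}^{\alpha_{i}}$ with each $\alpha_{i} \geq 1$, so that $\deg k = \sum_{i} \alpha_{i} \deg p_{i} \geq \sum_{i} \deg p_{i}$.

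I do not expect a serious obstacle, since the statement is essentially structural rather than analytic. The one point deserving care is the counting step: it is precisely the rigidity of fixing the degree $n$, which pins down the exponent of each prime, that prevents any single prime from contributing more than $\deg p$, and this is what makes the bound $\deg k$ uniform in $n$ rather than growing with $n$. I would also sanity-check the extreme case where $d$ is itself irreducible, in which the sum collapses to at most a single admissible term.
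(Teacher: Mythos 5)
Your proof is correct and follows essentially the same route as the paper's: both arguments rest on $\Lambda(f)$ being supported on prime powers, the observation that $(l,k)$ must divide every $f$ in the sum (forcing any contributing $f=p^{\alpha}$ to have $p\mid (l,k)$), and the degree rigidity $\alpha\deg p=n$ that leaves at most one admissible prime power per irreducible. The only cosmetic difference is that the paper notes the sum vanishes unless $(l,k)$ is itself a prime power $p_{0}^{\beta}$, so a single irreducible contributes and the bound is $\deg p_{0}\leq\deg k$, whereas you sum over all irreducible factors of $(l,k)$ and use $\sum_{p\mid k}\deg p\leq\deg k$; both yield the stated bound.
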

\begin{proof}
It is obvious that the formula
$\sum_{\substack{f \text{ monic } \\ \deg{f}=n \\ f\equiv l (k)}}{\f}$
does not vanish only if $(l, k)=p_{0}^{\beta}$, where $p_{0}(x)$ is a monic irreducible polynomial and $\beta$ is a positive integer. Then we have
\[\sum_{\substack{f \text{ monic } \\ \deg{f}=n \\ f\equiv l (k)}}{\f}=\sum_{\substack{p \text{ monic} \\ \alpha\deg{p}=n \\ p^{\alpha}\equiv l (k)}}{\deg{p}}\leq\deg p_{0}\leq \deg{k}.\]
\end{proof}
\subsection{Proof of the main result}
In this section, we shall prove Theorem \ref{1}.

{\sl Proof of Theorem \ref{1}.}
On one hand, by Definition \ref{17}, Lemmas \ref{14} and \ref{6}, we have the first equation
\[\begin{split}
   \sum_{\substack{f \text{ monic } \\ d_{f}=n \\ f\equiv l (k)}}{\f}&=\sum_{\substack{p \text{ monic} \\ \alpha\deg{p}=n \\ p^{\alpha}\equiv l (k)}}{\deg{p}}=n\sum_{\substack{p \text{ monic} \\ \deg{p}=n \\ p\equiv l (k)}}{1}+O\Big(\sum_{\substack{p \text{ monic} \\ \alpha\neq1\\\alpha\deg{p}=n}}{\deg{p}}\Big)\\
                     &=n\pi(l,k;n)+O{\left(q^{n}-n\pi(n)\right)}=n\pi(l,k;n)+O\Big(q^{\frac{n}{\Omega(n)}}\Big).
\end{split}\]
Therefore, applying Lemma \ref{11}, we get that
$$F(k,n)=n\pi(l,k;n)+O\left(q^{\frac{n}{\Omega(n)}}\right)+O\left(n^{\alpha}\right).$$
Then, let $l$ run over the ring $\mathbb{F}_{q}[x]/(k)$, by Lemma \ref{13}, we get that
\[\begin{split}
     \sum_{l\in\mathbb{F}_{q}[x]/(k)}\sum_{\substack{f \text{ monic } \\ \deg{f}=n \\ f\equiv l (k)}}{\f}&=\sum_{l\notin(\mathbb{F}_{q}[x]/(k))^{\times}}\sum_{\substack{f \text{ monic } \\ \deg{f}=n \\ f\equiv l (k)}}{\f}+\Phi(k)F(k,n)+O\left(n^{\alpha}\right)\\
                       &=\Phi(k)F(k,n)+O\left(1\right)+O\left(n^{\alpha}\right)\\
                       &=\Phi(k)n\pi(l,k;n)+O\left(n^{\alpha}\right)+O\left(q^{\frac{n}{\Omega(n)}}\right).
  \end{split}
\]
On the other hand, by Lemma \ref{14}, we have that $$\sum_{l\in\mathbb{F}_{q}[x]/(k)}\sum_{\substack{f \text{ monic } \\ \deg{f}=n \\ f\equiv l (k)}}{\f}= \sum_{\substack{f \text{ monic } \\ \deg{f}=n }}{\f} =q^{n}.$$
Finally, combining the above two equalities, we obtain that
$$\pi(l,k;n)= \frac{1}{\Phi(k)}\frac{q^{n}}{n}+O\Big(n^{\alpha}\Big)+O\Big(\frac{q^{\frac{n}{\Omega(n)}}}{n}\Big).$$
This completes the proof.
\qed


\bibliographystyle{elsarticle-num}

\begin{thebibliography}{17}
\bibitem{o}
E. Artin, \emph{Quadratische K\"{o}rper im Gebiete der h\"{o}heren Kongruenzen. II}, Math. Z. 19 (1924) 207--246.

\bibitem{Car}
M. Car, \emph{Distribution des polyn\v{o}mes irr\'{e}ductibles dans $\mathbb{F}_{q}[T]$}, Acta Arith. 88 (1999) 141--153.

\bibitem{1}
L. Carlitz, \emph{A theorem of Dickson on irreducible polynomials}, Proc. Amer. Math. Soc. 3 (1952) 693--700.


\bibitem{4}
L. E. Dickson, \emph{An invariantive investigation of irreducible binary modular forms}, Trans. Amer. Math. Soc. 12 (1911) 1--18.

\bibitem{a}
P. G. L. Dirichlet, \emph{Sur les s\'{e}ries dont le terme g\'{e}n\'{e}ral d\'{e}pend de deux angles, et qui servent \'{a} exprimer des fonctions arbitraires entre des limites donn\'{e}e}, J. reine angew. Math. 17 (1837) 35--56.	

\bibitem{Yucas}
R. W. Fitzgerald, J. L. Yucas, \emph{Irreducible polynomials over $GF(2)$ with three prescribed coefficients}, Finite Fields Appl. 9(3) (2003) 286--299.

\bibitem{Granger}
R. Granger, \emph{On the enumeration of irreducible polynomials over $GF(q)$ with prescribed coefficients}, Finite Fields Appl. 57 (2019) 156--229.

\bibitem{Ha}
J. Ha, \emph{Irreducible polynomials with several prescribed coefficients}, Finite Fields Appl. 40 (2016) 10--25.

\bibitem{Mullen}
K. H. Ham, G. L. Mullen, \emph{Distribution of irreducible polynomials of small degrees over finite fields}, Math. Comp. 67 (1998) 337--341.

\bibitem{3}
D. R. Hayes, \emph{The distribution of irreducibles in $GF[q,x]$}, Trans. Amer. Math. Soc. 117 (1965) 101--127.

\bibitem{c}
D. Hensley, \emph{Dirichlet’s theorem for the ring of polynomials over $GF(2)$}, Pacific J. Math. 123 (1986) 93--101.

\bibitem{Hsu}
C. N. Hsu, \emph{The distribution of irreducible polynomials in $\mathbb{F}_{q}[t]$}, J. Number Theory 61 (1996) 85--96.

\bibitem{b}
H. Kornblum, \emph{\"{U}ber die Primfunktionen in einer arithmetischen Progression}, Math. Z. 5 (1919) 100--111.

\bibitem{Pollack}
P. Pollack, \emph{Irreducible polynomials with several prescribed coefficients}, Finite Fields Appl. 22 (2013) 70--78.

\bibitem{j}
M. Rosen, \emph{Number theory in function fields}, GTM 210, Springer-Verlag, New York, 2002.

\bibitem{2}
S. Uchiyama, \emph{Sur les Polyn\^{o}mes Irr\'{e}ductibles daps un Corps Fini. II}, Proc. Japan Acad. 31 (1955) 267--269.

\bibitem{p}
A. Weil, \emph{Sur les courbes alg\'{e}briques et les vari\'{e}t\'{e}s qui s'en d\'{e}duisent}, Hermann \& Cer, Paris, 1948.

\bibitem{Mullen1}
J. L. Yucas, G. L. Mullen, \emph{Irreducible polynomials over $GF(2)$ with prescribed coefficients}, Discrete Math. 274(1--3) (2004) 265--279.








\end{thebibliography}

\end{document}